\theoremstyle{plain}
\newtheorem{theorem}{Theorem}[section]
\newtheorem{corollary}[theorem]{Corollary}
\newtheorem{lemma}[theorem]{Lemma}
\newtheorem{proposition}[theorem]{Proposition}
\theoremstyle{definition}
\newtheorem{definition}{Definition}[section]
\theoremstyle{remark}
\newtheorem{remark}{Remark}[section]
\numberwithin{equation}{section}
\numberwithin{table}{section}
\numberwithin{figure}{section} \setlength{\paperwidth}{210mm}
\begin{document}
\title[Means of Accretive Operators]
{Relative Entropy and Tsallis Entropy of two Accretive Operators}
\author[M. Ra\"{\i}ssouli, M.S. Moslehian, S. Furuichi]{M. Ra\"{\i}ssouli$^{1,2}$, M. S. Moslehian$^3$, and S. Furuichi$^4$}
\address{$^1$ Department of Mathematics, Science Faculty, Taibah University,
Al Madinah Al Munawwarah, P.O.Box 30097, Zip Code 41477, Saudi Arabia.}
\address{$^2$ Department of Mathematics, Faculty of Science, Moulay Ismail University, Meknes, Morocco.}
\email{raissouli.mustapha@gmail.com}

\address{$^3$ Department of Pure Mathematics, P.O. Box 1159, Ferdowsi University of Mashhad, Mashhad 91775, Iran.}
\email{moslehian@um.ac.ir}

\address{$^4$ Department of Information Science,
College of Humanities and Sciences, Nihon University,
3-25-40, Sakurajyousui, Setagaya-ku, Tokyo, 156-8550, Japan.}
\email{furuichi@chs.nihon-u.ac.jp}

\keywords{Accretive Operator; Weight Geometric Mean; Relative Entropy; Tsallis Entropy.}
\subjclass[2010]{47A63, 47A64, 46N10, 46L05.}

\begin{abstract}
Let $A$ and $B$ be two accretive operators. We first introduce the weighted geometric mean of $A$ and $B$ together with some related properties. Afterwards, we define the relative entropy as well as the Tsallis entropy of $A$ and $B$. The present definitions and their related results extend those already introduced in the literature for positive invertible operators.
\end{abstract}

\maketitle

\section{\bf Introduction}

Let $\big(H,\langle.,.\rangle\big)$ be a complex Hilbert space and let ${\mathcal B}(H)$ be the ${\mathbb C}^*$-algebra of bounded linear operators acting on $H$. Every $A\in{\mathcal B}(H)$ can be written in the following form
\begin{equation}\label{eqCD}
A=\Re A+i\Im A,\;\; \mbox{with}\;\; \Re A=\frac{A+A^*}{2}\;\; \mbox{and}\;\; \Im A=\frac{A-A^*}{2i}.
\end{equation}
This is known in the literature as the so-called Cartesian decomposition of $A$, where the operators $\Re A$ and $\Im A$ are the real and imaginary parts of $A$, respectively. As usual, if $A$ is self-adjoint (i.e. $A^*=A$ ) we say that $A$ is positive if $\langle Ax,x\rangle\geq0$ for all $x\in H$ and, $A$ is strictly positive if $A$ is positive and invertible. For $A,B\in{\mathcal B}(H)$ self-adjoint, we write $A\leq B$ or $B\geq A$ for meaning that $B-A$ is positive.

If $A,B\in{\mathcal B}(H)$ are strictly positive and $\lambda\in(0,1)$ is a real number, then the following
\begin{equation}\label{eq11}
A\nabla_{\lambda}B:=(1-\lambda)A+\lambda B,\, A!_{\lambda}B:=\Big((1-\lambda)A^{-1}+\lambda B^{-1}\Big)^{-1},\, A\sharp_{\lambda} B:=A^{1/2}\Big(A^{-1/2}BA^{-1/2}\Big)^{\lambda}A^{1/2}
\end{equation}
are known, in the literature, as the $\lambda$-weighted arithmetic, $\lambda$-weighted harmonic and $\lambda$-weighted geometric operator means of $A$ and $B$, respectively. If $\lambda=1/2$, they are simply denoted by $A\nabla B,\; A!B$ and $A\sharp B$, respectively. The following inequalities are well-known in the literature:
\begin{equation}\label{eqAGH}
A!_{\lambda}B\leq A\sharp_{\lambda}B\leq A\nabla_{\lambda}B.
\end{equation}

For more details about the previous operator means, as well as some other weighted and generalized operator means, we refer the interested reader to the recent paper \cite{PSMA} and the related references cited therein. For refined and reversed inequalities of (\ref{eqAGH}) one can consult \cite{HKKLP} and \cite{KKLP} for more information.

Now, let $A\in{\mathcal B}(H)$ be as in (\ref{eqCD}). We say that $A$ is accretive if its real part $\Re A$ is strictly positive. If $A,B\in{\mathcal B}(H)$ are accretive then so are $A^{-1}$ and $B^{-1}$. Further, it is easy to see that the set of all accretive operators acting on $H$ is a convex cone of ${\mathcal B}(H)$. Consequently, $A\nabla_{\lambda}B$ and $A!_{\lambda}B$ can be defined by the same formulas as previous whenever $A,B\in{\mathcal B}(H)$ are accretive. Clearly, the relationships
$A\nabla_{\lambda}B=B\nabla_{1-\lambda}A,\;\; A!_{\lambda}B=B!_{1-\lambda}A,\;\; A!_{\lambda}B=\big(A^{-1}\nabla_{\lambda}B^{-1}\big)^{-1}$
are also valid for any accretive $A,B\in{\mathcal B}(H)$ and $\lambda\in(0,1)$.

However, $A\sharp_{\lambda}B$ can not be defined by the same formula (\ref{eq11}) when $A,B\in{\mathcal B}(H)$ are accretive, by virtue of the presence of non-integer exponents for operators in (\ref{eq11}). For the particular case $\lambda=1/2$, Drury \cite{D} defined $A\sharp B$ via the following formula (where we continue to use the same notation)
\begin{equation}\label{eq12}
A\sharp B=\left(\frac{2}{\pi}\int_0^{\infty}\big(tA+t^{-1}B\big)^{-1}\frac{dt}{t}\right)^{-1}.
\end{equation}
It is proved in \cite{D} that $A\sharp B=B\sharp A$ and $A\sharp B=\big(A^{-1}\sharp B^{-1}\big)^{-1}$ for any accretive $A,B\in{\mathcal B}(H)$. It follows that (\ref{eq12}) is equivalent to:
\begin{equation}\label{eq125}
A\sharp B=\frac{2}{\pi}\int_0^{\infty}\big(tA^{-1}+t^{-1}B^{-1}\big)^{-1}\frac{dt}{t}=\frac{2}{\pi}\int_0^{\infty}A\big(tB+t^{-1}A\big)^{-1}B\frac{dt}{t}.
\end{equation}

In this paper we will define $A\sharp_{\lambda}B$ when the operators $A,B\in{\mathcal B}(H)$ are accretive. Some related operator inequalities are investigated. We also introduce the relative entropy and the Tsallis entropy for this class of operators.

\section{\bf Weighted Geometric Mean}

We start this section by stating the following definition which is the main tool for the present approach.

\begin{definition}
Let $A,B\in{\mathcal B}(H)$ be two accretive operators and let $\lambda\in(0,1)$. The $\lambda$-weighted geometric mean of $A$ and $B$ is defined by
\begin{equation}\label{eq21}
A\sharp_{\lambda}B:=\frac{\sin(\lambda\pi)}{\pi}\int_0^{\infty}t^{\lambda-1}\Big(A^{-1}+tB^{-1}\Big)^{-1}\;dt
=\frac{\sin(\lambda\pi)}{\pi}\int_0^{\infty}t^{\lambda-1}A\big(B+tA\big)^{-1}B\;dt.
\end{equation}
\end{definition}

In the aim to justify our previous definition we first state the following.

\begin{proposition}\label{pr21}
The following assertions are true:\\
(i) If $A,B\in{\mathcal B}(H)$ are strictly positive then (\ref{eq21}) coincides with (\ref{eq11}).\\
(ii) If $\lambda=1/2$ then (\ref{eq21}) coincides with (\ref{eq12}).
\end{proposition}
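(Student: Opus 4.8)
The plan is to treat the two parts by separate routes. For \textbf{(i)} I would diagonalise the integrand in (\ref{eq21}) by a congruence transformation and then reduce everything to a classical scalar integral via the spectral theorem; for \textbf{(ii)} I would deduce the identity directly from Drury's formulas (\ref{eq12})--(\ref{eq125}) by a single change of variable, so that no new analytic input is required. (As a preliminary one also notes that the two integral expressions in (\ref{eq21}) agree: this is the algebraic identity $(A^{-1}+tB^{-1})^{-1}=A(B+tA)^{-1}B$, obtained from $A^{-1}+tB^{-1}=(I+tB^{-1}A)A^{-1}$.)

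For part (i), assume $A,B$ strictly positive. I would first record that the integral in (\ref{eq21}) converges in the operator norm: since $(A^{-1}+tB^{-1})^{-1}\to A$ as $t\to0^{+}$ and $(A^{-1}+tB^{-1})^{-1}=(I+t^{-1}BA^{-1})^{-1}t^{-1}B=O(t^{-1})$ as $t\to\infty$, the integrand is $O(t^{\lambda-1})$ near $0$ (integrable because $\lambda>0$) and $O(t^{\lambda-2})$ near $\infty$ (integrable because $\lambda<1$). Then I would use the factorisation $A^{-1}+tB^{-1}=A^{-1/2}\bigl(I+tA^{1/2}B^{-1}A^{1/2}\bigr)A^{-1/2}$ and set $C:=A^{-1/2}BA^{-1/2}>0$, so that $A^{1/2}B^{-1}A^{1/2}=C^{-1}$; this gives
\[
\bigl(A^{-1}+tB^{-1}\bigr)^{-1}=A^{1/2}\bigl(I+tC^{-1}\bigr)^{-1}A^{1/2}=A^{1/2}\,C(C+tI)^{-1}A^{1/2}\qquad(t>0).
\]
Pulling the bounded factors $A^{1/2}$ out of the (norm-convergent) integral, the assertion reduces to the operator identity $\frac{\sin(\lambda\pi)}{\pi}\int_0^{\infty}t^{\lambda-1}C(C+tI)^{-1}\,dt=C^{\lambda}$, after which $A^{1/2}C^{\lambda}A^{1/2}=A^{1/2}\bigl(A^{-1/2}BA^{-1/2}\bigr)^{\lambda}A^{1/2}$ is exactly the weighted geometric mean in (\ref{eq11}).

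To establish $\frac{\sin(\lambda\pi)}{\pi}\int_0^{\infty}t^{\lambda-1}C(C+tI)^{-1}\,dt=C^{\lambda}$ I would insert the spectral resolution $C=\int_{\sigma(C)}c\,dE(c)$ (note that $\sigma(C)$ is a compact subset of $(0,\infty)$ since $C$ is positive and invertible), interchange the two integrations, and invoke the classical formula $\int_0^{\infty}u^{\lambda-1}(1+u)^{-1}\,du=\pi/\sin(\lambda\pi)$: the substitution $u=t/c$ turns this into $\frac{\sin(\lambda\pi)}{\pi}\int_0^{\infty}t^{\lambda-1}c(c+t)^{-1}\,dt=c^{\lambda}$ for every $c>0$, whence $\frac{\sin(\lambda\pi)}{\pi}\int_0^{\infty}t^{\lambda-1}C(C+tI)^{-1}\,dt=\int_{\sigma(C)}c^{\lambda}\,dE(c)=C^{\lambda}$. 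The interchange of integrations is the only point needing care; I would justify it by pairing with a vector $x\in H$, which reduces it to the scalar Tonelli theorem applied to the finite positive measure $\langle E(\cdot)x,x\rangle$ and the nonnegative kernel $t^{\lambda-1}c/(c+t)$, and then conclude from the fact that a bounded self-adjoint operator is determined by its quadratic form.

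For part (ii) take $\lambda=1/2$; since $\sin(\pi/2)=1$, formula (\ref{eq21}) reads $A\sharp_{1/2}B=\frac1\pi\int_0^{\infty}t^{-1/2}\bigl(A^{-1}+tB^{-1}\bigr)^{-1}\,dt$. By \cite{D} (the first integral in (\ref{eq125})), the right-hand side of (\ref{eq12}) also equals $\frac{2}{\pi}\int_0^{\infty}\bigl(sA^{-1}+s^{-1}B^{-1}\bigr)^{-1}\frac{ds}{s}$, and in this absolutely convergent integral I would substitute $s=t^{-1/2}$. From $sA^{-1}+s^{-1}B^{-1}=t^{-1/2}\bigl(A^{-1}+tB^{-1}\bigr)$ and $\frac{ds}{s}=-\tfrac12\frac{dt}{t}$ one gets
\[
\frac{2}{\pi}\int_0^{\infty}\bigl(sA^{-1}+s^{-1}B^{-1}\bigr)^{-1}\frac{ds}{s}=\frac{1}{\pi}\int_0^{\infty}t^{-1/2}\bigl(A^{-1}+tB^{-1}\bigr)^{-1}\,dt,
\]
which is precisely $A\sharp_{1/2}B$. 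Here there is essentially no obstacle — a bijective smooth change of variable in an absolutely convergent integral is automatically valid — so the only real difficulty in the whole proposition is the Fubini/Tonelli interchange in part (i).
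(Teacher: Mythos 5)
Your proposal is correct and follows essentially the same route as the paper: for (i), conjugating by $A^{1/2}$ and reducing via the functional calculus to the scalar identity $\frac{\sin(\lambda\pi)}{\pi}\int_0^{\infty}t^{\lambda-1}c(c+t)^{-1}\,dt=c^{\lambda}$ (the paper evaluates the equivalent integral through the beta function, you through the classical formula $\int_0^{\infty}u^{\lambda-1}(1+u)^{-1}\,du=\pi/\sin(\lambda\pi)$); for (ii), a single change of variables matching (\ref{eq21}) at $\lambda=1/2$ with Drury's formula (\ref{eq125}). Your added justifications (norm convergence of the integral and the Tonelli interchange) are details the paper leaves implicit, not a different method.
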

\begin{proof}
(i) Assume that $A,B\in{\mathcal B}(H)$ are strictly positive. From (\ref{eq21}) it is easy to see that
$$A\sharp_{\lambda}B=A^{1/2}\left(\frac{\sin(\lambda\pi)}{\pi}\int_0^{\infty}t^{\lambda-1}\Big(I+tA^{1/2}B^{-1}A^{1/2}\Big)^{-1}\;dt\right)A^{1/2},$$
where $I$ denotes the identity operator on $H$. Since $A^{1/2}B^{-1}A^{1/2}$ is self-adjoint strictly positive then it is sufficient, by virtue of (\ref{eq11}), to show that the following equality
$$a^{-\lambda}=\frac{\sin(\lambda\pi)}{\pi}\int_0^{\infty}t^{\lambda-1}\big(1+ta\big)^{-1}dt$$
holds for all real number $a>0$. If we make the change of variables $u=(1+ta)^{-1}$, the previous real integral becomes after simple manipulations (here the notations $B$ and $\Gamma$ refer to the standard beta and gamma functions)
$$\frac{\sin(\lambda\pi)}{a^{\lambda}\pi}\int_0^1(1-u)^{\lambda-1}u^{-\lambda}du=\frac{1}{a^{\lambda}}
\frac{\sin(\lambda\pi)}{\pi}B\big(\lambda,1-\lambda\big)=\frac{1}{a^{\lambda}}
\frac{\sin(\lambda\pi)}{\pi}\Gamma(\lambda)\Gamma(1-\lambda)=\frac{1}{a^{\lambda}}.$$
The proof of (i) is finished.\\
(ii) Let $A,B\in{\mathcal B}(H)$ be accretive. If $\lambda=1/2$ then (\ref{eq21}) yields
$$A\sharp B=\frac{1}{\pi}\int_0^{\infty}\big(A^{-1}+tB^{-1}\big)^{-1}\frac{dt}{\sqrt{t}},$$
which, with the change of variables $u=\sqrt{t}$, becomes (after simple computation)
$$A\sharp B=\frac{2}{\pi}\int_0^{\infty}\big(A^{-1}+u^2B^{-1}\big)^{-1}du=\frac{2}{\pi}\int_0^{\infty}\big(u^{-1}A^{-1}+uB^{-1}\big)^{-1}\frac{du}{u}.$$
This, with (\ref{eq125}) and the fact that $A\sharp B=B\sharp A$, yields the desired result. The proof of the proposition is completed.
\end{proof}

From a functional point of view, we are allowing to state another equivalent form of (\ref{eq21}) which seems to be more convenient for our aim in the sequel.

\begin{lemma}\label{lem21}
For any accretive $A,B\in{\mathcal B}(H)$ and $\lambda\in(0,1)$, there holds
\begin{equation}\label{eq22}
A\sharp_{\lambda}B=\frac{\sin(\lambda\pi)}{\pi}\int_0^{1}\frac{t^{\lambda-1}}{(1-t)^{\lambda}}\big(A!_{t}B\big)dt.
\end{equation}
\end{lemma}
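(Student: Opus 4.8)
The plan is to derive the single--integral representation (\ref{eq22}) from the first form of $A\sharp_{\lambda}B$ in (\ref{eq21}) by an elementary change of variable. Recall that, by definition,
\[
A\sharp_{\lambda}B=\frac{\sin(\lambda\pi)}{\pi}\int_0^{\infty}s^{\lambda-1}\big(A^{-1}+sB^{-1}\big)^{-1}\,ds,
\]
while, by (\ref{eq11}) extended to accretive operators, $A!_{t}B=\big((1-t)A^{-1}+tB^{-1}\big)^{-1}$ for $t\in(0,1)$. Thus everything comes down to matching the two integrands.

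First I would substitute $s=t/(1-t)$, which is an increasing $C^{1}$-bijection from $(0,1)$ onto $(0,\infty)$ with inverse $t=s/(1+s)$, $1-t=1/(1+s)$ and $ds=(1-t)^{-2}\,dt$. Under it one has the purely algebraic identity
\[
A^{-1}+sB^{-1}=\frac{1}{1-t}\Big((1-t)A^{-1}+tB^{-1}\Big),
\]
hence $\big(A^{-1}+sB^{-1}\big)^{-1}=(1-t)\,\big(A!_{t}B\big)$, together with $s^{\lambda-1}=t^{\lambda-1}(1-t)^{1-\lambda}$. Collecting the three scalar factors $t^{\lambda-1}(1-t)^{1-\lambda}$, $(1-t)$ and $(1-t)^{-2}$ produces exactly $t^{\lambda-1}(1-t)^{-\lambda}$, so $s^{\lambda-1}\big(A^{-1}+sB^{-1}\big)^{-1}\,ds$ transforms into $\dfrac{t^{\lambda-1}}{(1-t)^{\lambda}}\big(A!_{t}B\big)\,dt$, which is precisely the integrand on the right--hand side of (\ref{eq22}).

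The one point that requires care --- and which I expect to be the main (though mild) obstacle --- is to justify this change of variable for a $\mathcal B(H)$-valued (Bochner) integral, that is, to check that both integrals converge absolutely in operator norm and that the scalar substitution therefore passes through. Since $A,B$ are accretive, $A^{-1}+sB^{-1}$ is accretive with $\Re\big(A^{-1}+sB^{-1}\big)\geq\Re A^{-1}\geq mI$ for some $m>0$, so $\big\|\big(A^{-1}+sB^{-1}\big)^{-1}\big\|\leq 1/m$ and $s\mapsto\big(A^{-1}+sB^{-1}\big)^{-1}$ is norm--continuous on $(0,\infty)$; near $s=0$ the integrand is $O(s^{\lambda-1})$, and writing $\big(A^{-1}+sB^{-1}\big)^{-1}=\big(I+s^{-1}BA^{-1}\big)^{-1}s^{-1}B$ shows it is $O(s^{\lambda-2})$ as $s\to\infty$, both integrable because $0<\lambda<1$. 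Likewise $t\mapsto A!_{t}B$ is norm--continuous on $(0,1)$ with $A!_{t}B\to A$ as $t\to0^{+}$ and $A!_{t}B\to B$ as $t\to1^{-}$, so the right--hand integrand is $O(t^{\lambda-1})$ near $0$ and $O((1-t)^{-\lambda})$ near $1$, again integrable. With absolute convergence in hand, the identity follows either by applying an arbitrary bounded linear functional $\varphi\in\mathcal B(H)^{*}$ to both integrals and reducing to the scalar change of variables (then invoking Hahn--Banach), or directly by performing the substitution on Riemann sums and passing to the limit; either way (\ref{eq22}) is established.
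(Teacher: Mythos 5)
Your proof is correct and uses exactly the same change of variables as the paper ($s=t/(1-t)$, i.e.\ the substitution $t=u/(1-u)$ stated in the paper's one-line proof), with the algebra worked out accurately. The only difference is that you also supply the norm-convergence estimates justifying the substitution for the operator-valued integral, which the paper omits as ``simple topics of real integration.''
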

\begin{proof}
If in (\ref{eq21}) we make the change of variables $t=u/(1-u),\; u\in[0,1)$, we obtain the desired result after simple topics of real integration. Detail is simple and therefore omitted here.
\end{proof}

Using the previous lemma, it is not hard to verify that the following formula
$$A\sharp_{\lambda}B=B\sharp_{1-\lambda}A$$
persists for any accretive $A,B\in{\mathcal B}(H)$ and $\lambda\in(0,1)$. Moreover, it is clear that $\Re\big(A\nabla_{\lambda}B\big)=(\Re A)\nabla_{\lambda}(\Re B)$. About $A!_{\lambda}B$ we state the following lemma which will also be needed in the sequel.

\begin{lemma}\label{lem22}
For any accretive $A,B\in{\mathcal B}(H)$ and $\lambda\in(0,1)$, it holds that
\begin{equation}\label{eq23}
\Re\big(A!_{\lambda}B\big)\geq\big(\Re A\big)!_{\lambda}\big(\Re B\big).
\end{equation}
\end{lemma}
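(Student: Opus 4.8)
\emph{Proof proposal.} The plan is to argue at the level of quadratic forms: it is enough to show that $\big\langle\Re(A!_{\lambda}B)x,x\big\rangle\geq\big\langle\big((\Re A)!_{\lambda}(\Re B)\big)x,x\big\rangle$ for every $x\in H$. Since $A,B$ are accretive, so are $A^{-1},B^{-1}$, and hence so is $C:=(1-\lambda)A^{-1}+\lambda B^{-1}$; in particular $C$ is invertible and $A!_{\lambda}B=C^{-1}$. Fixing $x$, I would introduce the vectors $y:=C^{-1}x=(A!_{\lambda}B)x$, $u:=A^{-1}y$ and $v:=B^{-1}y$, and record the bookkeeping identity $(1-\lambda)u+\lambda v=Cy=x$.

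Using the elementary identities $\langle\Re(M)z,z\rangle=\Re\langle Mz,z\rangle$ and $\Re\langle Az,z\rangle=\langle(\Re A)z,z\rangle$ (the latter from $\langle Az,z\rangle+\overline{\langle Az,z\rangle}=\langle(A+A^{*})z,z\rangle$), I would rewrite the left-hand form as
\begin{align*}
\big\langle\Re(A!_{\lambda}B)x,x\big\rangle&=\Re\langle C^{-1}x,x\rangle=\Re\langle y,Cy\rangle\\
&=(1-\lambda)\Re\langle y,A^{-1}y\rangle+\lambda\Re\langle y,B^{-1}y\rangle\\
&=(1-\lambda)\langle(\Re A)u,u\rangle+\lambda\langle(\Re B)v,v\rangle.
\end{align*}
Thus the left-hand side is precisely the value, at the admissible pair $(u,v)$, of the functional $(1-\lambda)\langle(\Re A)\cdot,\cdot\rangle+\lambda\langle(\Re B)\cdot,\cdot\rangle$ subject to the weighted constraint $(1-\lambda)u+\lambda v=x$.

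Next I would establish a variational lower bound for strictly positive $P,Q$: for all $u,v\in H$ with $(1-\lambda)u+\lambda v=x$,
\[
(1-\lambda)\langle Pu,u\rangle+\lambda\langle Qv,v\rangle\;\geq\;\big\langle(P!_{\lambda}Q)x,x\big\rangle.
\]
This follows by completing the square: for any $w\in H$, positivity of $\langle P(u-P^{-1}w),u-P^{-1}w\rangle$ gives $\langle Pu,u\rangle\geq 2\Re\langle w,u\rangle-\langle P^{-1}w,w\rangle$, and similarly $\langle Qv,v\rangle\geq 2\Re\langle w,v\rangle-\langle Q^{-1}w,w\rangle$ with the same $w$; combining with weights $1-\lambda,\lambda$ and using the constraint yields $(1-\lambda)\langle Pu,u\rangle+\lambda\langle Qv,v\rangle\geq 2\Re\langle w,x\rangle-\langle(P!_{\lambda}Q)^{-1}w,w\rangle$, and the choice $w=(P!_{\lambda}Q)x$ collapses the right-hand side to $\langle(P!_{\lambda}Q)x,x\rangle$ (using self-adjointness of $P!_{\lambda}Q$, so that this number is real).

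Finally, applying this bound with $P=\Re A$, $Q=\Re B$ and the particular $u,v$ built above (which satisfy $(1-\lambda)u+\lambda v=x$) gives $\big\langle\Re(A!_{\lambda}B)x,x\big\rangle\geq\big\langle\big((\Re A)!_{\lambda}(\Re B)\big)x,x\big\rangle$, and since $x\in H$ is arbitrary, (\ref{eq23}) follows. The only delicate point — the main obstacle — is spotting the constraint identity $(1-\lambda)u+\lambda v=x$ that makes the substitution $(u,v)=(A^{-1}y,B^{-1}y)$ feasible for the variational bound; everything else is routine inner-product computation, together with the (already granted) accretivity of $A^{-1}$, which guarantees that $C$ and $y$ are well defined.
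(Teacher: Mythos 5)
Your proof is correct, but it takes a genuinely different route from the paper's. The paper derives (\ref{eq23}) in two lines from a theorem of Mathias \cite{M} asserting that $A\longmapsto\big(\Re(A^{-1})\big)^{-1}$ is operator convex on the cone of accretive operators: applying that convexity with $A^{-1},B^{-1}$ in place of $A,B$ gives $\big(\Re(A!_{\lambda}B)\big)^{-1}\leq(1-\lambda)(\Re A)^{-1}+\lambda(\Re B)^{-1}$, and inverting both sides (inversion being order-reversing on strictly positive operators) yields the claim. You instead prove the inequality from scratch at the level of quadratic forms: the identity $\langle\Re(A!_{\lambda}B)x,x\rangle=(1-\lambda)\langle(\Re A)u,u\rangle+\lambda\langle(\Re B)v,v\rangle$ for $u=A^{-1}y$, $v=B^{-1}y$ with the constraint $(1-\lambda)u+\lambda v=x$, combined with the variational (infimum) characterization of the weighted harmonic mean of two strictly positive operators, which you establish by completing the square. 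This is in effect a self-contained proof of exactly the special case of Mathias's convexity result that the lemma needs; the lower bound you prove is the classical Anderson--Duffin/Pusz--Woronowicz variational description of the parallel sum. The paper's route buys brevity at the cost of an external citation; yours buys a fully elementary, verifiable argument. All your steps check out, including the implicit use of the fact that an accretive operator (real part positive and invertible, hence bounded below) is itself invertible, so that $C=(1-\lambda)A^{-1}+\lambda B^{-1}$ and $y=C^{-1}x$ are well defined.
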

\begin{proof}
Let $f(A)=\big(\Re(A^{-1})\big)^{-1}$ be defined on the convex cone of accretive operators $A\in{\mathcal B}(H)$. In \cite{M}, Mathias proved that $f$ is operator convex, i.e.
$$f\Big((1-\lambda)A+\lambda B\Big)\leq(1-\lambda)f(A)+\lambda f(B).$$
This means that
$$\Big(\Re\big((1-\lambda)A+\lambda B\big)^{-1}\Big)^{-1}\leq(1-\lambda)\big(\Re(A^{-1})\big)^{-1}+\lambda\big(\Re(B^{-1})\big)^{-1}.$$
Replacing in this latter inequality $A$ and $B$ by the accretive operators $A^{-1}$ and $B^{-1}$, respectively, and using the fact that the map $X\longmapsto X^{-1}$ is operator monotone increasing for $X\in{\mathcal B}(H)$ strictly positive, we then deduce (\ref{eq23}).
\end{proof}

We now are in a position to state our first main result (which extends Theorem 1.1 of \cite{LS}).

\begin{theorem}
Let $A,B\in{\mathcal B}(H)$ be accretive and $\lambda\in(0,1)$. Then
\begin{equation}\label{eq24}
\Re\Big(A\sharp_{\lambda}B\Big)\geq(\Re A)\sharp_{\lambda}(\Re B).
\end{equation}
\end{theorem}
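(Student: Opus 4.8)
The plan is to read the inequality (\ref{eq24}) directly off the integral representation (\ref{eq22}) of Lemma \ref{lem21}, feeding into it the pointwise operator inequality (\ref{eq23}) of Lemma \ref{lem22}. Write
\[
w(t):=\frac{\sin(\lambda\pi)}{\pi}\,\frac{t^{\lambda-1}}{(1-t)^{\lambda}},\qquad t\in(0,1),
\]
and note that $w(t)\geq0$ on $(0,1)$, since $\lambda\in(0,1)$ forces $\sin(\lambda\pi)>0$, and that $w\in L^1(0,1)$ because $\lambda-1>-1$ and $-\lambda>-1$. By (\ref{eq22}), $A\sharp_{\lambda}B=\int_0^1 w(t)\,(A!_{t}B)\,dt$, the integral being absolutely convergent since $t\mapsto A!_tB$ is bounded on $(0,1)$ as a consequence of the accretivity of $A$ and $B$.

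First I would push $\Re$ inside the integral. As $\Re X=(X+X^{*})/2$ and the adjoint commutes with the operator-valued integral, this yields
\[
\Re\big(A\sharp_{\lambda}B\big)=\int_0^1 w(t)\,\Re\big(A!_{t}B\big)\,dt .
\]
The manipulation is harmless: pairing both sides with a fixed $x\in H$ reduces it to the scalar identity $\overline{\int_0^1 w(t)\langle (A!_tB)x,x\rangle\,dt}=\int_0^1 w(t)\,\overline{\langle (A!_tB)x,x\rangle}\,dt$.

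Next, Lemma \ref{lem22} gives $\Re\big(A!_{t}B\big)\geq(\Re A)!_{t}(\Re B)$ for every $t\in(0,1)$, an inequality between self-adjoint operators, and integrating it against the nonnegative weight $w$ preserves the order: for each $x\in H$,
\[
\big\langle\Re\big(A\sharp_{\lambda}B\big)x,x\big\rangle=\int_0^1 w(t)\,\big\langle\Re(A!_tB)x,x\big\rangle\,dt\geq\int_0^1 w(t)\,\big\langle\big((\Re A)!_t(\Re B)\big)x,x\big\rangle\,dt .
\]
Since $\Re A$ and $\Re B$ are strictly positive, hence accretive, Lemma \ref{lem21} applied to the pair $(\Re A,\Re B)$ identifies the last integral with $\big\langle\big((\Re A)\sharp_{\lambda}(\Re B)\big)x,x\big\rangle$; by Proposition \ref{pr21}(i) the value is the same whether $\sharp_\lambda$ is understood in the sense of (\ref{eq21}) or of (\ref{eq11}). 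As $x\in H$ is arbitrary, (\ref{eq24}) follows.

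The only steps calling for justification are the interchange of $\Re$ with the operator-valued integral and the monotonicity of integration against $w\geq0$; both are soft and are settled by pairing with vectors, as indicated above. The substantive input — the harmonic-mean inequality (\ref{eq23}), which ultimately rests on Mathias's operator convexity of $X\mapsto\big(\Re(X^{-1})\big)^{-1}$ — has already been isolated in Lemma \ref{lem22}, so I do not anticipate any real obstacle beyond this bookkeeping.
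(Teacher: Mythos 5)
Your proposal is correct and follows essentially the same route as the paper: it combines the integral representation (\ref{eq22}) with the pointwise harmonic-mean inequality (\ref{eq23}) and then identifies the resulting integral with $(\Re A)\sharp_{\lambda}(\Re B)$ via Proposition \ref{pr21}. The extra care you take with the positivity and integrability of the weight and with passing $\Re$ through the integral is sound bookkeeping that the paper leaves implicit.
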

\begin{proof}
By (\ref{eq22}) with (\ref{eq23}) we can write
$$\Re\Big(A\sharp_{\lambda}B\Big)=\frac{\sin(\lambda\pi)}{\pi}\int_0^{1}\frac{t^{\lambda-1}}{(1-t)^{\lambda}}\Re\big(A!_{t}B\big)dt
\geq\frac{\sin(\lambda\pi)}{\pi}\int_0^{1}\frac{t^{\lambda-1}}{(1-t)^{\lambda}}\big(\Re A\big)!_{t}\big(\Re B\big)dt,$$
which, when combined with Proposition \ref{pr21}, implies the desired result.
\end{proof}

\section{\bf Relative/Tsallis Operator Entropy}

Let $A,B\in{\mathcal B}(H)$ be strictly positive and $\lambda\in(0,1)$. The relative operator entropy ${\mathcal S}(A|B)$ and the Tsallis relative operator entropy
${\mathcal T}_{\lambda}(A|B)$ were defined by
\begin{equation}\label{eq31}
{\mathcal S}(A|B):=A^{1/2}\log\Big(A^{-1/2}BA^{-1/2}\Big)A^{1/2}.
\end{equation}
\begin{equation}\label{eq32}
{\mathcal T}_{\lambda}(A|B):=\frac{A\sharp_{\lambda}B-A}{\lambda},
\end{equation}
see \cite{FK, FS, F} for instance. The Tsallis relative operator entropy is a parametric extension in the sense that
\begin{equation}\label{eq312_limit}
\lim_{\lambda \to 0} {\mathcal T}_{\lambda}(A|B) = {\mathcal S}(A|B).
\end{equation}
For more details about these operator entropies, we refer the reader to \cite{F2} and \cite{MMM} and the related references cited therein.

Our aim in this section is to extend ${\mathcal S}(A|B)$ and ${\mathcal T}_{\lambda}(A|B)$ for accretive $A,B\in{\mathcal B}(H)$. Following the previous study we suggest that ${\mathcal T}_{\lambda}(A|B)$ can be defined by the same formula (\ref{eq32}) whenever $A,B\in{\mathcal B}(H)$ are accretive and so $A\sharp_{\lambda}B$ is given by (\ref{eq22}). Precisely, we have

\begin{definition}
Let $A,B\in{\mathcal B}(H)$ be accretive and let $\lambda \in (0,1)$. The Tsallis relative operator entropy of $A$ and $B$ is defined by
\begin{equation}\label{eq33_Tsallis}
{\mathcal T_{\lambda}}(A|B)= \frac{\sin \lambda \pi}{\lambda \pi} \int_0^1 \left( \frac{t}{1-t} \right)^{\lambda}\left( \frac{A !_t B -A}{t} \right) dt.
\end{equation}
\end{definition}

This, with (\ref{eq32}) and (\ref{eq24}), immediately yields
$$\Re\Big({\mathcal T}_{\lambda}(A|B)\Big)\geq{\mathcal T}_{\lambda}\big(\Re A|\Re B\big)$$
for any accretive $A,B\in{\mathcal B}(H)$ and $\lambda\in(0,1)$.

In view of (\ref{eq33_Tsallis}), extension of ${\mathcal S}(A|B)$ can be introduced via the following definition (where we always conserve the same notation, for the sake of simplicity).

\begin{definition}
Let $A,B\in{\mathcal B}(H)$ be accretive. The relative operator entropy
 of $A$ and $B$ is defined by
\begin{equation}\label{eq33}
{\mathcal S}(A|B)=\int_0^1\frac{A!_tB-A}{t}dt.
\end{equation}
\end{definition}

The following proposition gives a justification as regards the previous definition.

\begin{proposition}\label{pr31}
If $A,B\in{\mathcal B}(H)$ are strictly positive then (\ref{eq33}) coincides with (\ref{eq31}).
\end{proposition}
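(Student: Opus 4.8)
The plan is to reduce the operator identity to a scalar identity via the same functional-calculus trick used in Proposition~\ref{pr21}(i), and then evaluate a concrete real integral. Since $A,B\in{\mathcal B}(H)$ are strictly positive, I would first observe that for each $t\in(0,1)$
$$A!_tB-A=\Big((1-t)A^{-1}+tB^{-1}\Big)^{-1}-A
=A^{1/2}\left(\Big((1-t)I+tA^{1/2}B^{-1}A^{1/2}\Big)^{-1}-I\right)A^{1/2},$$
so that $A^{-1/2}\big({\mathcal S}(A|B)\big)A^{-1/2}$ equals $\int_0^1 t^{-1}\big(((1-t)I+tX)^{-1}-I\big)\,dt$ where $X:=A^{1/2}B^{-1}A^{1/2}$ is self-adjoint and strictly positive. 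By the spectral theorem it then suffices to prove the scalar identity
$$\int_0^1\frac{1}{t}\left(\frac{1}{(1-t)+tx}-1\right)dt=-\log x=\log\frac{1}{x}$$
for every real $x>0$; feeding this back through the functional calculus gives $A^{-1/2}\,{\mathcal S}(A|B)\,A^{-1/2}=\log\big((A^{1/2}B^{-1}A^{1/2})^{-1}\big)=\log\big(A^{-1/2}BA^{-1/2}\big)$, which is exactly (\ref{eq31}).

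The scalar computation is elementary: writing the integrand as
$$\frac{1}{t}\cdot\frac{1-\big((1-t)+tx\big)}{(1-t)+tx}=\frac{1}{t}\cdot\frac{t(1-x)}{(1-t)+tx}=\frac{1-x}{1+t(x-1)},$$
the integral becomes $\int_0^1\frac{1-x}{1+t(x-1)}\,dt=\big[\log\big(1+t(x-1)\big)\big]_{t=0}^{t=1}=\log x$ when $x\neq1$, hence the value is $\log\frac1x$ as claimed, and the degenerate case $x=1$ gives $0=\log 1$ trivially. One should also remark that the integrand of (\ref{eq33}) is integrable near $t=0$: indeed $\big((1-t)+tx\big)^{-1}-1=O(t)$ as $t\to0^+$, so $t^{-1}\big(A!_tB-A\big)$ extends continuously to $t=0$, and the Bochner integral in (\ref{eq33}) is well defined. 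The interchange of the integral with the (bounded) maps $Y\mapsto A^{-1/2}YA^{-1/2}$ and with the spectral integral is justified by this integrability together with the continuity in $t$ of the integrand.

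The only genuinely delicate point is the passage from the scalar identity back to the operator statement, i.e.\ justifying that $\int_0^1 t^{-1}\big(((1-t)I+tX)^{-1}-I\big)\,dt=\log(X^{-1})$ as a Bochner integral in ${\mathcal B}(H)$. This is handled by noting that $t\mapsto ((1-t)I+tX)^{-1}$ is norm-continuous on $[0,1]$ (the operators $(1-t)I+tX$ are strictly positive with spectra in a fixed compact subinterval of $(0,\infty)$), so the integrand $t\mapsto t^{-1}\big(((1-t)I+tX)^{-1}-I\big)$ is norm-continuous on $(0,1]$ and, by the $O(t)$ estimate above, bounded near $0$; hence the Bochner integral exists, and evaluating it against any vector $x\in H$ via the spectral measure $E$ of $X$ reduces, by Fubini for the scalar measures $\langle E(\cdot)x,x\rangle$, to the scalar identity integrated over the spectrum of $X$, giving $\langle\log(X^{-1})x,x\rangle$. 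Since this holds for all $x$, the operator identity follows, completing the proof.
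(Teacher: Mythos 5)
Your proposal is correct and takes essentially the same route as the paper: reduce to the scalar identity $\int_0^1 t^{-1}\big(((1-t)+tx)^{-1}-1\big)\,dt=\log\frac{1}{x}$ by conjugating with $A^{\pm 1/2}$ and applying the functional calculus to $X=A^{1/2}B^{-1}A^{1/2}$, then evaluate the elementary integral (you additionally spell out the Bochner-integrability and interchange justifications that the paper leaves implicit). The only blemish is a dropped minus sign in the intermediate display --- the antiderivative of $\frac{1-x}{1+t(x-1)}$ is $-\log\big(1+t(x-1)\big)$, so the chain should read $\int_0^1\frac{1-x}{1+t(x-1)}\,dt=-\big[\log\big(1+t(x-1)\big)\big]_{t=0}^{t=1}=-\log x$ --- but your stated final value $\log\frac{1}{x}$ is the correct one, so the argument stands.
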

\begin{proof}
Assume that $A,B\in{\mathcal B}(H)$ are strictly positive. By (\ref{eq33}), with the definition of $A!_{t}B$, it is easy to see that
$${\mathcal S}(A|B)=A^{1/2}\left(\int_0^{1}\frac{\Big((1-t)I+tA^{1/2}B^{-1}A^{1/2}\Big)^{-1}-I}{t}dt\right)A^{1/2}.$$
By similar arguments as those for the proof of Proposition \ref{pr21}, it is sufficient to show that
$$\log a=\int_0^1\frac{\big(1-t+ta^{-1}\big)^{-1}-1}{t}dt$$
is valid for any $a>0$. This follows from a simple computation of this latter real integral, so completing the proof.
\end{proof}

\begin{theorem}
Let $A,B\in{\mathcal B}(H)$ be accretive. Then
\begin{equation}\label{eq34}
\Re\Big({\mathcal S}(A|B)\Big)\geq{\mathcal S}\big(\Re A|\Re B\big).
\end{equation}
\end{theorem}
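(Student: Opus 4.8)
The plan is to mimic exactly the proof of the previous theorem (inequality (\ref{eq24})), replacing the integral representation (\ref{eq22}) of the weighted geometric mean with the integral representation (\ref{eq33}) of the relative operator entropy. That is, I would start from the definition
$$
{\mathcal S}(A|B)=\int_0^1\frac{A!_tB-A}{t}\,dt,
$$
take real parts, and push $\Re$ inside the integral. Since $\Re$ is a real-linear bounded map, it commutes with the (Bochner or weak) operator integral, so
$$
\Re\big({\mathcal S}(A|B)\big)=\int_0^1\frac{\Re(A!_tB)-\Re A}{t}\,dt.
$$

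The key step is then to estimate the integrand pointwise. By Lemma \ref{lem22} we have $\Re(A!_tB)\geq(\Re A)!_t(\Re B)$ for every $t\in(0,1)$, hence
$$
\frac{\Re(A!_tB)-\Re A}{t}\geq\frac{(\Re A)!_t(\Re B)-\Re A}{t}
$$
for each fixed $t$, because $t>0$ and subtracting the fixed self-adjoint operator $\Re A$ and dividing by the positive scalar $t$ both preserve the operator order. Integrating this inequality over $t\in(0,1)$ — integration of operator inequalities against a nonnegative measure preserves the order — yields
$$
\Re\big({\mathcal S}(A|B)\big)\geq\int_0^1\frac{(\Re A)!_t(\Re B)-\Re A}{t}\,dt.
$$
Finally, since $\Re A$ and $\Re B$ are strictly positive, the right-hand side is exactly ${\mathcal S}(\Re A|\Re B)$ by Definition (\ref{eq33}) (equivalently by Proposition \ref{pr31}), which gives (\ref{eq34}).

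The only genuine subtlety, and the step I would treat most carefully, is the integrability/convergence at the endpoints $t=0$ and $t=1$: one should note that $t\mapsto(A!_tB-A)/t$ extends to an operator-valued function that is integrable near $0$ (the numerator vanishes to first order at $t=0$, so the quotient stays bounded) and near $1$ (where $A!_tB$ tends to $B$), so that all the integrals written above converge in the strong operator topology and the interchange of $\Re$ with $\int_0^1$ is legitimate; the same remark already underlies Definition (\ref{eq33}) and Proposition \ref{pr31}, so it can be invoked rather than reproved. Once that is granted, the argument is a routine transcription of the proof of (\ref{eq24}), with Lemma \ref{lem22} playing the role it played there.
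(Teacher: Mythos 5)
Your proposal is correct and follows essentially the same route as the paper's own proof: write $\Re\big({\mathcal S}(A|B)\big)$ as the integral of $\big(\Re(A!_tB)-\Re A\big)/t$, apply Lemma \ref{lem22} pointwise in $t$, and identify the resulting lower bound with ${\mathcal S}(\Re A|\Re B)$ via Proposition \ref{pr31}. Your added remarks on endpoint integrability and interchanging $\Re$ with the integral are sound bookkeeping that the paper leaves implicit.
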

\begin{proof}
By (\ref{eq33}) with Lemma \ref{lem22} we have
$$\Re\big({\mathcal S}(A|B)\big)=\int_0^1\frac{\Re(A!_tB)-\Re A}{t}dt\geq\int_0^1\frac{\big(\Re A\big)!_t\big(\Re B\Big)-\Re A}{t}dt.$$
This, with Proposition \ref{pr31}, immediately yields (\ref{eq34}).
\end{proof}

\begin{proposition}\label{pr31_Tsallis}
If $A,B\in{\mathcal B}(H)$ are strictly positive then (\ref{eq33_Tsallis}) coincides with (\ref{eq32}).
\end{proposition}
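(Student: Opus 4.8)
The plan is to reduce the right-hand side of (\ref{eq33_Tsallis}) to $(A\sharp_{\lambda}B-A)/\lambda$ by splitting the integral and invoking Lemma~\ref{lem21} together with the classical Beta--Gamma identity already used in the proof of Proposition~\ref{pr21}. First I would write
\[
\left(\frac{t}{1-t}\right)^{\lambda}\frac{A!_tB-A}{t}=\frac{t^{\lambda-1}}{(1-t)^{\lambda}}\,\big(A!_tB\big)-\frac{t^{\lambda-1}}{(1-t)^{\lambda}}\,A ,
\]
and before separating the two terms I would check that each resulting integral converges: for strictly positive $A,B$ the map $t\mapsto A!_tB$ is norm-continuous (hence norm-bounded) on $[0,1]$, while $t^{\lambda-1}$ is integrable near $t=0$ since $\lambda>0$ and $(1-t)^{-\lambda}$ is integrable near $t=1$ since $\lambda<1$. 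Thus both pieces are Bochner integrable and the term-by-term integration is legitimate.

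Next I would evaluate the two pieces. By Lemma~\ref{lem21},
\[
\int_0^1\frac{t^{\lambda-1}}{(1-t)^{\lambda}}\big(A!_tB\big)\,dt=\frac{\pi}{\sin(\lambda\pi)}\,\big(A\sharp_{\lambda}B\big),
\]
and for the constant part I would use
\[
\int_0^1\frac{t^{\lambda-1}}{(1-t)^{\lambda}}\,dt=B(\lambda,1-\lambda)=\Gamma(\lambda)\Gamma(1-\lambda)=\frac{\pi}{\sin(\lambda\pi)},
\]
which is precisely Euler's reflection formula exploited in Proposition~\ref{pr21}. Substituting these back into (\ref{eq33_Tsallis}) yields
\[
{\mathcal T}_{\lambda}(A|B)=\frac{\sin(\lambda\pi)}{\lambda\pi}\left(\frac{\pi}{\sin(\lambda\pi)}\big(A\sharp_{\lambda}B\big)-\frac{\pi}{\sin(\lambda\pi)}\,A\right)=\frac{A\sharp_{\lambda}B-A}{\lambda}.
\]
Finally, since $A$ and $B$ are strictly positive, Proposition~\ref{pr21}(i) guarantees that the $A\sharp_{\lambda}B$ appearing here (defined by (\ref{eq21}), equivalently (\ref{eq22})) coincides with $A^{1/2}(A^{-1/2}BA^{-1/2})^{\lambda}A^{1/2}$, so the right-hand side is exactly (\ref{eq32}).

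The argument is essentially routine; the only points that need a little care are the justification of the term-by-term integration (which is where the restriction $0<\lambda<1$ is actually used) and the recollection of the reflection formula $\Gamma(\lambda)\Gamma(1-\lambda)=\pi/\sin(\lambda\pi)$, so I anticipate no genuine obstacle. Alternatively, one could imitate the scalar reduction of Propositions~\ref{pr21} and \ref{pr31}: conjugating by $A^{1/2}$ reduces the identity to the numerical statement $x^{-\lambda}-1=\frac{\sin(\lambda\pi)}{\pi}\int_0^1\frac{t^{\lambda-1}}{(1-t)^{\lambda}}\big((1-t+tx)^{-1}-1\big)\,dt$ for $x>0$, which follows by combining the two scalar integrals already evaluated above.
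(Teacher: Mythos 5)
Your proof is correct and takes essentially the same route as the paper's: both split the integrand into the geometric-mean piece and a scalar Euler/Beta integral multiplying $A$, the only difference being that you stay on $[0,1]$ and invoke Lemma~\ref{lem21} directly, while the paper first substitutes $s=t/(1-t)$ to return to the $[0,\infty)$ form of (\ref{eq21}). Your explicit justification of the term-by-term integration and the appeal to Proposition~\ref{pr21}(i) to identify $A\sharp_{\lambda}B$ with the classical mean are welcome details that the paper leaves implicit.
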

\begin{proof}
Putting $1-t = \frac{1}{s+1}$, (\ref{eq33_Tsallis}) is calculated as
$${\mathcal T_{\lambda}}(A|B)
= \frac{\sin \lambda \pi}{\lambda \pi} \int_0^{\infty} s^{\lambda -1} \left\{ \left(A^{-1}+s B^{-1} \right)^{-1} -(1+s)^{-1}A\right\} ds.
$$
For $A,B >0$, we have
$$
\frac{\sin \lambda \pi}{\pi} \int_0^{\infty} s^{\lambda-1} \left(A^{-1}+s B^{-1} \right)^{-1} ds =A \sharp_{\lambda}B
$$
and
$$
\frac{\sin \lambda \pi}{\pi} \int_0^{\infty} s^{\lambda-1} \left(1+s \right)^{-1} ds =1,
$$
which imply the assertion.

\end{proof}

We note that (ii) of Proposition \ref{pr31_Tsallis} is a generalization of (\ref{eq312_limit}).
We end this section by stating the following remark.

\begin{remark}
Analog of (\ref{eqAGH}), for accretive $A,B\in{\mathcal B}(H)$, does not persist, i.e.
$$\Re\Big(A!_{\lambda}B\Big)\leq\Re\Big(A\sharp_{\lambda}B\Big)\leq\Re\Big(A\nabla_{\lambda}B\Big)$$
fail for some accretive $A,B\in{\mathcal B}(H)$. For $\lambda=1/2$, this was pointed out in \cite{LS} and the same arguments may be used for general $\lambda\in(0,1)$.
\end{remark}

However, the following remark worth to be mentioned.

\begin{remark}
In \cite{LIN} (see Section 3, Theorem 3), M. Lin presented an extension of the geometric mean-arithmetic mean inequality
$A\sharp B\leq A\nabla B$
from positive matrices to accretive matrices (called there, sector matrices). By similar arguments, we can obtain an analogue inequality between the $\lambda$-weighted geometric mean $A\sharp_{\lambda}B$ and the $\lambda$-weighted arithmetic mean $A\nabla_{\lambda}B$, when $A$ and $B$ are sector matrices. We omit the details about this latter point to the reader.
\end{remark}

\section{\bf More about $A\sharp_{\lambda}B$}

We preserve the same notation as previous. The operator mean $A\sharp_{\lambda}B$ enjoys more other properties which we will discuss in this section. For any real numbers $\alpha,\beta>0$ we set $\alpha\sharp_{\lambda}\beta=\alpha^{1-\lambda}\beta^{\lambda}$ the real $\lambda$-weighted geometric mean of $\alpha$ and $\beta$.

Now, the following proposition may be stated.

\begin{proposition}\label{pr41}
For any accretive $A,B\in{\mathcal B}(H)$ and $\lambda\in(0,1)$ the following equality
\begin{equation}\label{eq41}
(\alpha A)\sharp_{\lambda}(\beta B)=(\alpha\sharp_{\lambda}\beta)\big(A\sharp_{\lambda}B\big)
\end{equation}
holds for every real numbers $\alpha,\beta>0$.
\end{proposition}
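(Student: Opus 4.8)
The plan is to work directly from the first integral representation in \eqref{eq21}, exploiting the fact that the positive scalars $\alpha$ and $\beta$ commute with every bounded operator. Starting from
$$(\alpha A)\sharp_{\lambda}(\beta B)=\frac{\sin(\lambda\pi)}{\pi}\int_0^{\infty}t^{\lambda-1}\Big(\alpha^{-1}A^{-1}+t\beta^{-1}B^{-1}\Big)^{-1}dt,$$
I would first factor the scalar $\alpha^{-1}$ out of the operator $\alpha^{-1}A^{-1}+t\beta^{-1}B^{-1}=\alpha^{-1}\big(A^{-1}+t(\alpha/\beta)B^{-1}\big)$, so that, using $(\alpha^{-1}X)^{-1}=\alpha X^{-1}$, the integrand becomes $\alpha\,t^{\lambda-1}\big(A^{-1}+t(\alpha/\beta)B^{-1}\big)^{-1}$.

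Next I would perform the change of variables $s=(\alpha/\beta)t$, which is legitimate since $\alpha/\beta>0$: then $t^{\lambda-1}\,dt=(\beta/\alpha)^{\lambda}s^{\lambda-1}\,ds$ and $t(\alpha/\beta)=s$, whence
$$(\alpha A)\sharp_{\lambda}(\beta B)=\alpha\,(\beta/\alpha)^{\lambda}\cdot\frac{\sin(\lambda\pi)}{\pi}\int_0^{\infty}s^{\lambda-1}\big(A^{-1}+sB^{-1}\big)^{-1}ds=\alpha^{1-\lambda}\beta^{\lambda}\,\big(A\sharp_{\lambda}B\big),$$
the last integral being exactly the defining formula \eqref{eq21} for $A\sharp_{\lambda}B$ and $\alpha(\beta/\alpha)^{\lambda}=\alpha^{1-\lambda}\beta^{\lambda}$. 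Since $\alpha^{1-\lambda}\beta^{\lambda}=\alpha\sharp_{\lambda}\beta$ by the convention fixed just before the statement, this is precisely \eqref{eq41}.

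I do not anticipate a genuine obstacle: the argument reduces to a one-line scalar factorisation followed by a rescaling of the integration variable. The only points meriting (routine) care are that positive scalars commute with all operators so that $(\alpha X)^{-1}=\alpha^{-1}X^{-1}$, and that the integrals involved converge, which is already implicit in Definition~2.1. One could run the identical computation from the second form in \eqref{eq21} or from the representation \eqref{eq22} via the scaling of $A!_{t}B$ when $\alpha=\beta$ together with a limiting/approximation argument, but the first representation makes the homogeneity most transparent and is the route I would take.
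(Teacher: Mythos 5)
Your proof is correct and follows essentially the same route as the paper: both arguments use the integral representation \eqref{eq21}, factor the positive scalar out of the resolvent, and rescale the integration variable. The only (cosmetic) difference is that the paper first invokes the symmetry $A\sharp_{\lambda}B=B\sharp_{1-\lambda}A$ to reduce to the single-scalar identity $(\alpha A)\sharp_{\lambda}B=\alpha^{1-\lambda}(A\sharp_{\lambda}B)$, whereas you handle $\alpha$ and $\beta$ simultaneously in one substitution.
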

\begin{proof}
Since $A\sharp_{\lambda}B=B\sharp_{1-\lambda}A$ it is then sufficient to prove that
$(\alpha A)\sharp_{\lambda}B=\alpha^{1-\lambda}(A\sharp_{\lambda}B)$. By equation (\ref{eq21}), we have
$$(\alpha A)\sharp_{\lambda}B=\frac{\sin(\lambda\pi)}{\pi}\int_0^{\infty}t^{\lambda-1}
\Big(\frac{A^{-1}}{\alpha}+tB^{-1}\Big)^{-1}dt=\alpha\frac{\sin(\lambda\pi)}{\pi}\int_0^{\infty}t^{\lambda-1}
\big(A^{-1}+\alpha tB^{-1}\Big)^{-1}dt.$$
If we make the change of variables $u=\alpha t$, and we use again (\ref{eq21}), we immediately obtain the desired equality after simple manipulations.
\end{proof}

We now state the following result which is also of interest.

\begin{theorem}\label{th41}
Let $A,B\in{\mathcal B}(H)$ be accretive and $\lambda\in(0,1)$. Then the following inequality
\begin{equation}\label{eq42}
\sum_{k=1}^n\langle\Big(\Re\big(A\sharp_{\lambda}B\big)\Big)^{-1}x_k,x_k\rangle\leq\left(\sum_{k=1}^n\langle\big(\Re A\big)^{-1}x_k,x_k\rangle\right)
\sharp_{\lambda}
\left(\sum_{k=1}^n\langle\big(\Re B\big)^{-1}x_k,x_k\rangle\right)
\end{equation}
holds true, for any family of vectors $(x_k)_{k=1}^n\in H$.
\end{theorem}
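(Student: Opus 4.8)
The plan is to use the integral representation \eqref{eq22} for $A\sharp_\lambda B$ together with Lemma \ref{lem22}, then reduce the matter to a scalar inequality about weighted geometric means of sums of nonnegative numbers. First I would take real parts in \eqref{eq22}. Since the real part is a linear, positivity-preserving map and the kernel $\frac{\sin(\lambda\pi)}{\pi}\frac{t^{\lambda-1}}{(1-t)^\lambda}$ is nonnegative on $(0,1)$, Lemma \ref{lem22} gives
$$\Re\big(A\sharp_\lambda B\big)=\frac{\sin(\lambda\pi)}{\pi}\int_0^1\frac{t^{\lambda-1}}{(1-t)^\lambda}\Re\big(A!_tB\big)\,dt\geq\frac{\sin(\lambda\pi)}{\pi}\int_0^1\frac{t^{\lambda-1}}{(1-t)^\lambda}\big(\Re A\big)!_t\big(\Re B\big)\,dt=(\Re A)\sharp_\lambda(\Re B),$$
the last equality being the definition \eqref{eq22} applied to the strictly positive operators $\Re A,\Re B$. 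Since $\Re(A\sharp_\lambda B)\geq(\Re A)\sharp_\lambda(\Re B)>0$ and inversion is operator monotone decreasing on strictly positive operators, we obtain $\big(\Re(A\sharp_\lambda B)\big)^{-1}\leq\big((\Re A)\sharp_\lambda(\Re B)\big)^{-1}$. Hence, summing the quadratic forms over $k$,
$$\sum_{k=1}^n\big\langle\big(\Re(A\sharp_\lambda B)\big)^{-1}x_k,x_k\big\rangle\leq\sum_{k=1}^n\big\langle\big((\Re A)\sharp_\lambda(\Re B)\big)^{-1}x_k,x_k\big\rangle,$$
so it remains to bound the right-hand side by the claimed weighted geometric mean of the two sums involving $(\Re A)^{-1}$ and $(\Re B)^{-1}$.

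The next step is the key reduction. Write $P=\Re A$, $Q=\Re B$, both strictly positive. I would use the integral formula \eqref{eq21} (or \eqref{eq22}) once more, now for the genuine operator geometric mean $P\sharp_\lambda Q$, to express $(P\sharp_\lambda Q)^{-1}$ as a superposition of harmonic means: from $P\sharp_\lambda Q=\frac{\sin(\lambda\pi)}{\pi}\int_0^\infty t^{\lambda-1}(P^{-1}+tQ^{-1})^{-1}\,dt$ together with $P\sharp_\lambda Q=(P^{-1}\sharp_\lambda Q^{-1})^{-1}$ one gets
$$\big(P\sharp_\lambda Q\big)^{-1}=P^{-1}\sharp_\lambda Q^{-1}=\frac{\sin(\lambda\pi)}{\pi}\int_0^\infty t^{\lambda-1}\big(P+tQ\big)^{-1}\,dt.$$
Pairing with $x_k$ and summing,
$$\sum_{k=1}^n\big\langle(P\sharp_\lambda Q)^{-1}x_k,x_k\big\rangle=\frac{\sin(\lambda\pi)}{\pi}\int_0^\infty t^{\lambda-1}\sum_{k=1}^n\big\langle(P+tQ)^{-1}x_k,x_k\big\rangle\,dt.$$
Now I would invoke the convexity/superadditivity property of the map $S\mapsto\sum_k\langle S^{-1}x_k,x_k\rangle$ on strictly positive operators — concretely, that for strictly positive $P,Q$ one has $\sum_k\langle(P+tQ)^{-1}x_k,x_k\rangle\leq\big(\sum_k\langle P^{-1}x_k,x_k\rangle\big)\,!_{?}\,\cdots$; more to the point, it suffices to know $\big(\sum_k\langle(P+tQ)^{-1}x_k,x_k\rangle\big)^{-1}\geq\big(\sum_k\langle P^{-1}x_k,x_k\rangle\big)^{-1}+t^{-1}\big(\sum_k\langle Q^{-1}x_k,x_k\rangle\big)^{-1}$, i.e. that $a:=\sum_k\langle P^{-1}x_k,x_k\rangle$ and $b:=\sum_k\langle Q^{-1}x_k,x_k\rangle$ satisfy $\sum_k\langle(P+tQ)^{-1}x_k,x_k\rangle\leq (a^{-1}+t^{-1}b^{-1})^{-1}$. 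This is exactly the statement that $P\mapsto(\sum_k\langle P^{-1}x_k,x_k\rangle)^{-1}$ is superadditive (equivalently concave and positively homogeneous) on strictly positive operators, which is a standard consequence of operator convexity of $P\mapsto P^{-1}$ together with the fact that $\langle P^{-1}x,x\rangle=\sup_y\{2\Re\langle x,y\rangle-\langle Py,y\rangle\}$.

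Granting that, substitute into the integral: with $a,b>0$ fixed scalars,
$$\sum_{k=1}^n\big\langle(P\sharp_\lambda Q)^{-1}x_k,x_k\big\rangle\leq\frac{\sin(\lambda\pi)}{\pi}\int_0^\infty t^{\lambda-1}\big(a^{-1}+t^{-1}b^{-1}\big)^{-1}\,dt=\frac{\sin(\lambda\pi)}{\pi}\int_0^\infty t^{\lambda-1}\big(a^{-1}t+b^{-1}\big)^{-1}t\,dt,$$
and the elementary scalar identity already used in the proof of Proposition \ref{pr21} (after the change of variables $u=(1+t a^{-1}b)^{-1}$, or directly the Beta-function computation) evaluates this to $a^{1-\lambda}b^{\lambda}=a\sharp_\lambda b$. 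Combining with the first chain of inequalities yields \eqref{eq42}. The main obstacle I anticipate is the second step — rigorously establishing the scalar super-additivity bound $\sum_k\langle(P+tQ)^{-1}x_k,x_k\rangle\leq\big(a^{-1}+t^{-1}b^{-1}\big)^{-1}$ uniformly in $t$, and in particular checking that the quantities $a,b$ are finite and positive and that Fubini/Tonelli applies so the scalar integral identity can be pulled inside the sum; the first and third steps are routine given the tools already developed in the paper.
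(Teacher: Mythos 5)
Your overall strategy is genuinely different from the paper's: you reduce to the strictly positive pair $(\Re A,\Re B)$ via \eqref{eq24} and operator monotonicity of inversion, and then try to prove the purely ``positive'' inequality $\sum_k\langle(P\sharp_\lambda Q)^{-1}x_k,x_k\rangle\le a\sharp_\lambda b$ by pushing the quadratic forms through the integral representation of $(P\sharp_\lambda Q)^{-1}$. This can be made to work, but two points need repair. First, a sign error: by positive homogeneity, $\bigl(\sum_k\langle(tQ)^{-1}x_k,x_k\rangle\bigr)^{-1}=t\,b^{-1}$, so the superadditivity bound must read
\begin{equation*}
\sum_{k=1}^n\langle(P+tQ)^{-1}x_k,x_k\rangle\le\bigl(a^{-1}+t\,b^{-1}\bigr)^{-1},
\end{equation*}
not $\bigl(a^{-1}+t^{-1}b^{-1}\bigr)^{-1}$. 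As you wrote it, the integrand tends to $a\,t^{\lambda-1}$ as $t\to\infty$ and your displayed integral diverges; with the corrected exponent, the Beta-type identity from the proof of Proposition \ref{pr21} does evaluate the integral to $a^{1-\lambda}b^{\lambda}$. Second, the superadditivity of $S\mapsto\bigl(\sum_k\langle S^{-1}x_k,x_k\rangle\bigr)^{-1}$ is true but does not follow merely from convexity of $S\mapsto\langle S^{-1}x,x\rangle$ (the reciprocal of a convex function need not be concave). A correct proof needs the pointwise parallel-sum inequality $\langle(P+Q)^{-1}x,x\rangle\le\bigl(\langle P^{-1}x,x\rangle^{-1}+\langle Q^{-1}x,x\rangle^{-1}\bigr)^{-1}$, obtained from the variational formula you quote, followed by Milne's inequality $\sum_k(a_k^{-1}+b_k^{-1})^{-1}\le\bigl((\sum_ka_k)^{-1}+(\sum_kb_k)^{-1}\bigr)^{-1}$ to pass to the sums over $k$ (equivalently, the classical fact that positive linear maps are subadditive with respect to the parallel sum). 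You rightly flagged this as the main obstacle; it is a genuine lemma that must be supplied, not a routine remark.

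For comparison, the paper's proof is shorter and bypasses both issues. From \eqref{eq24} and the harmonic--geometric inequality for positive operators it gets $\bigl(\Re(A\sharp_\lambda B)\bigr)^{-1}\le(1-\lambda)(\Re A)^{-1}+\lambda(\Re B)^{-1}$; replacing $A$ by $tA$ and using the homogeneity of Proposition \ref{pr41} turns this into the one-parameter family $t^{\lambda}\bigl(\Re(A\sharp_\lambda B)\bigr)^{-1}\le(1-\lambda)(\Re A)^{-1}+t\lambda(\Re B)^{-1}$, and \eqref{eq42} follows by summing the quadratic forms and optimizing over $t>0$ (the classical Young-inequality trick). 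No integral representation and no superadditivity lemma are needed. Your route is conceptually appealing in that it isolates a general principle about quadratic forms of inverses, but it costs an extra nontrivial lemma; if you fix the sign and prove the superadditivity as indicated, your argument is a valid alternative.
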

\begin{proof}
By (\ref{eq24}), with the left-side of (\ref{eqAGH}), we have
$$\Re\big(A\sharp_{\lambda}B\big)\geq(\Re A)\sharp_{\lambda}(\Re B)\geq(\Re A)!_{\lambda}(\Re B),$$
from which we deduce
$$\Big(\Re\big(A\sharp_{\lambda}B\big)\Big)^{-1}\leq(1-\lambda)(\Re A)^{-1}+\lambda(\Re B)^{-1}.$$
Replacing in this latter inequality $A$ by $tA$, with $t>0$ real number, and using Proposition \ref{pr41}, we obtain (after a simple manipulation)
$$t^{\lambda}\Big(\Re\big(A\sharp_{\lambda}B\big)\Big)^{-1}\leq(1-\lambda)(\Re A)^{-1}+t\lambda(\Re B)^{-1}.$$
This means that, for any $x\in H$ and $t>0$, we have
\begin{equation*}
t^{\lambda}\langle\Big(\Re\big(A\sharp_{\lambda}B\big)\Big)^{-1}x,x\rangle\leq
(1-\lambda)\langle(\Re A)^{-1}x,x\rangle+t\lambda\langle(\Re B)^{-1}x,x\rangle,
\end{equation*}
and so
\begin{equation}\label{eq43}
t^{\lambda}\sum_{k=1}^n\langle\Big(\Re\big(A\sharp_{\lambda}B\big)\Big)^{-1}x_k,x_k\rangle\leq
(1-\lambda)\sum_{k=1}^n\langle(\Re A)^{-1}x_k,x_k\rangle+t\lambda\sum_{k=1}^n\langle(\Re B)^{-1}x_k,x_k\rangle,
\end{equation}
holds for any $(x_k)_{k=1}^n\in H$ and $t>0$. If $x_k=0$ for each $k=1,2,...,n$ then (\ref{eq42}) is an equality. Assume that $x_k\neq0$ for some $k=1,2,...,n$. If we take
$$t=\left(\frac{\sum_{k=1}^n\langle\Big(\Re\big(A\sharp_{\lambda}B\big)\Big)^{-1}x_k,x_k\rangle}
{\sum_{k=1}^n\langle(\Re B)^{-1}x_k,x_k\rangle}\right)^{1/(1-\lambda)}>0$$
in (\ref{eq43}) and we compute and reduce, we immediately obtain the desired inequality. Detail is very simple and therefore omitted here.
\end{proof}

As a consequence of the previous theorem, we obtain the following.

\begin{corollary}\label{cor41}
Let $A,B$ and $\lambda$ be as above. Then
\begin{eqnarray}\label{111}
\Big\|\Big(\Re\big(A\sharp_{\lambda}B\big)\Big)^{-1}\Big\|\leq\big\|\big(\Re A\big)^{-1}\big\|^{1-\lambda}\big\|\big(\Re B\big)^{-1}\big\|^\lambda,
\end{eqnarray}
where, for any $T\in{\mathcal B}(H)$, $\|T\|:=\sup_{\|x\|=1}\|Tx\|$ is the usual norm of ${\mathcal B}(H)$.
\end{corollary}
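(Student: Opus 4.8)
The plan is to deduce Corollary \ref{cor41} directly from Theorem \ref{th41} by a judicious choice of the family of vectors $(x_k)_{k=1}^n$. First I would specialize \eqref{eq42} to the case $n=1$, so that it reads
\begin{equation*}
\langle\big(\Re(A\sharp_{\lambda}B)\big)^{-1}x,x\rangle\leq\langle(\Re A)^{-1}x,x\rangle^{1-\lambda}\,\langle(\Re B)^{-1}x,x\rangle^{\lambda}
\end{equation*}
for every $x\in H$, using that $\alpha\sharp_{\lambda}\beta=\alpha^{1-\lambda}\beta^{\lambda}$ by definition.

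Next I would estimate the right-hand side. Since $(\Re A)^{-1}$ and $(\Re B)^{-1}$ are strictly positive (as $\Re A$ and $\Re B$ are strictly positive, the operators $A,B$ being accretive), each is bounded above by its norm times the identity, so $\langle(\Re A)^{-1}x,x\rangle\leq\|(\Re A)^{-1}\|\,\|x\|^2$ and similarly for $B$. Plugging these into the displayed inequality gives
\begin{equation*}
\langle\big(\Re(A\sharp_{\lambda}B)\big)^{-1}x,x\rangle\leq\|(\Re A)^{-1}\|^{1-\lambda}\,\|(\Re B)^{-1}\|^{\lambda}\,\|x\|^2
\end{equation*}
for all $x\in H$. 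Taking the supremum over $\|x\|=1$, and using that for the positive operator $T=\big(\Re(A\sharp_{\lambda}B)\big)^{-1}$ one has $\|T\|=\sup_{\|x\|=1}\langle Tx,x\rangle$, yields precisely \eqref{111}.

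The only point requiring a moment's care is the very last identification $\|T\|=\sup_{\|x\|=1}\langle Tx,x\rangle$: this is the standard fact that the operator norm of a positive (hence self-adjoint) operator equals its numerical radius, and it applies here because $\Re(A\sharp_{\lambda}B)$ is strictly positive by \eqref{eq24} (being bounded below by $(\Re A)\sharp_{\lambda}(\Re B)>0$), so its inverse is positive and self-adjoint. I do not expect any genuine obstacle; the corollary is essentially the $n=1$ instance of Theorem \ref{th41} combined with the elementary operator bound $\langle T^{-1}x,x\rangle\leq\|T^{-1}\|\|x\|^2$.
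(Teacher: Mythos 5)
Your argument is correct and follows essentially the same route as the paper: specialize Theorem \ref{th41} to $n=1$, bound $\langle(\Re A)^{-1}x,x\rangle$ and $\langle(\Re B)^{-1}x,x\rangle$ by the corresponding norms, and take the supremum over unit vectors using $\|T\|=\sup_{\|x\|=1}\langle Tx,x\rangle$ for positive $T$. The paper's proof is just a one-line version of this; your write-up supplies the intermediate estimates and the (correct) justification that $\Re(A\sharp_{\lambda}B)$ is strictly positive via \eqref{eq24}.
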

\begin{proof}
Follows from (\ref{eq42}) with $n=1$ and the fact that
$$\|T\|:=\sup_{\|x\|=1}\|Tx\|=\sup_{\|x\|=1}\langle Tx,x\rangle$$
whenever $T\in{\mathcal B}(H)$ is a positive operator.
\end{proof}

It is interesting to see whether \eqref{111} holds for any unitarily invariant norm. Theorem \ref{th41} gives an inequality about $\big(\Re(A\sharp_{\lambda}B)\big)^{-1}$. The following result gives another inequality but involving
$\Re(A\sharp_{\lambda}B)$.

\begin{theorem}\label{th42}
Let $A,B$ and $\lambda$ be as in Theorem \ref{th41}. Then
\begin{equation}\label{eq44}
\Big(\Re e\langle x^*,x\rangle\Big)^2\leq\Big(\langle\Re\big(A\sharp_{\lambda}B\big)x^*,x^*\rangle\Big)\Big(\langle\big(\Re A\big)^{-1}x,x\rangle\sharp_{\lambda}
\langle\big(\Re B\big)^{-1}x,x\rangle\Big)
\end{equation}
for all $x,x^*\in H$
\end{theorem}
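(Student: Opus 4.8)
The plan is to deduce (\ref{eq44}) from a Cauchy--Schwarz estimate for the strictly positive operator $P:=\Re\big(A\sharp_{\lambda}B\big)$, combined with the scalar inequality (\ref{eq42}) of Theorem \ref{th41} taken in the special case $n=1$.

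First I would record that $P$ is strictly positive: by (\ref{eq24}) one has $P=\Re\big(A\sharp_{\lambda}B\big)\geq(\Re A)\sharp_{\lambda}(\Re B)>0$, so $P^{1/2}$ and $P^{-1/2}$ are bounded positive operators. For arbitrary $x,x^{*}\in H$ I would write $\langle x^{*},x\rangle=\langle P^{1/2}x^{*},P^{-1/2}x\rangle$ and apply the ordinary Cauchy--Schwarz inequality to get
\begin{equation*}
\big(\Re e\langle x^{*},x\rangle\big)^{2}\leq\big|\langle x^{*},x\rangle\big|^{2}\leq\|P^{1/2}x^{*}\|^{2}\,\|P^{-1/2}x\|^{2}=\big\langle Px^{*},x^{*}\big\rangle\,\big\langle P^{-1}x,x\big\rangle ,
\end{equation*}
the first inequality being the elementary bound $\big(\Re e\,z\big)^{2}\leq|z|^{2}$ for a complex number $z$.

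It then remains to bound $\big\langle P^{-1}x,x\big\rangle=\big\langle\big(\Re(A\sharp_{\lambda}B)\big)^{-1}x,x\big\rangle$. Applying (\ref{eq42}) with $n=1$ and the single vector $x_{1}=x$ gives precisely
\begin{equation*}
\big\langle\big(\Re(A\sharp_{\lambda}B)\big)^{-1}x,x\big\rangle\leq\big\langle(\Re A)^{-1}x,x\big\rangle\,\sharp_{\lambda}\,\big\langle(\Re B)^{-1}x,x\big\rangle ,
\end{equation*}
and inserting this into the previous display yields (\ref{eq44}).

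I do not anticipate any genuine obstacle; the only points needing attention are the strict positivity of $P$, which legitimises the factorisation through $P^{\pm 1/2}$, and the harmless passage from $|\langle x^{*},x\rangle|$ to $\Re e\langle x^{*},x\rangle$. If one prefers an argument not quoting (\ref{eq42}), the same scalar bound on $\big\langle P^{-1}x,x\big\rangle$ follows directly from the operator inequality $t^{\lambda}P^{-1}\leq(1-\lambda)(\Re A)^{-1}+t\lambda(\Re B)^{-1}$ established in the proof of Theorem \ref{th41}: pairing it with $x$ and minimising the resulting scalar bound over $t>0$ by the weighted arithmetic--geometric mean inequality (the minimum occurring at $t=\langle(\Re A)^{-1}x,x\rangle/\langle(\Re B)^{-1}x,x\rangle$ when $x\neq0$, the case $x=0$ being trivial) reproduces exactly the estimate above.
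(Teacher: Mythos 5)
Your proof is correct, and it reaches the paper's inequality by the same overall reduction: everything hinges on Theorem \ref{th41} with $n=1$, combined with a Cauchy--Schwarz-type duality between $P=\Re(A\sharp_\lambda B)$ and $P^{-1}$. The only real difference is how that duality is implemented. The paper quotes the variational formula $\langle T^{-1}x^*,x^*\rangle=\sup_{x}\{2\Re e\langle x^*,x\rangle-\langle Tx,x\rangle\}$ from \cite{R}, obtains $2\Re e\langle x^*,x\rangle\leq\langle Px^*,x^*\rangle+\langle(\Re A)^{-1}x,x\rangle\sharp_{\lambda}\langle(\Re B)^{-1}x,x\rangle$, and then rescales $x^*\mapsto tx^*$ and optimises over $t$ to convert the sum bound into a product bound. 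You instead factor $\langle x^*,x\rangle=\langle P^{1/2}x^*,P^{-1/2}x\rangle$ and apply the ordinary Cauchy--Schwarz inequality to get $(\Re e\langle x^*,x\rangle)^2\leq\langle Px^*,x^*\rangle\langle P^{-1}x,x\rangle$ directly, then invoke \eqref{eq42} with $n=1$. These are really the same inequality in two guises (the variational formula is equivalent to the attained Cauchy--Schwarz bound), but your version is self-contained: it avoids the external reference to \cite{R} and the $t$-optimisation step, at the modest cost of having to note explicitly that $P$ is strictly positive (which you do, correctly, via \eqref{eq24}) so that $P^{\pm1/2}$ make sense. Your closing alternative, minimising $t^{-\lambda}\big((1-\lambda)\langle(\Re A)^{-1}x,x\rangle+t\lambda\langle(\Re B)^{-1}x,x\rangle\big)$ at $t=\langle(\Re A)^{-1}x,x\rangle/\langle(\Re B)^{-1}x,x\rangle$, is also correct and simply re-derives \eqref{eq42} for $n=1$ in passing.
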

\begin{proof}
Following \cite{R}, for any $T\in{\mathcal B}(H)$ strictly positive the following equality
$$\langle T^{-1}x^*,x^*\rangle=\sup_{x\in H}\Big\{2\Re e\langle x^*,x\rangle-\langle Tx,x\rangle\Big\}$$
is valid for all $x^*\in H$. This, with (\ref{eq42}) for $n=1$, immediately implies that
$$2\Re e\langle x^*,x\rangle\leq\langle\Re\big(A\sharp_{\lambda}B\big)x^*,x^*\rangle+\langle\big(\Re A\big)^{-1}x,x\rangle\sharp_{\lambda}
\langle\big(\Re B\big)^{-1}x,x\rangle$$
holds for all $x^*,x\in H$. In this latter inequality we can, of course, replace $x^*$ by $tx^*$ for any real number $t$, for obtaining
\begin{equation}\label{eq45}
2t\Re e\langle x^*,x\rangle\leq t^2\langle\Re\big(A\sharp_{\lambda}B\big)x^*,x^*\rangle+\langle\big(\Re A\big)^{-1}x,x\rangle\sharp_{\lambda}
\langle\big(\Re B\big)^{-1}x,x\rangle.
\end{equation}
If $x^*=0$ the inequality (\ref{eq44}) is obviously an equality. We then assume that $x^*\neq 0$. If in inequality (\ref{eq45}) we take
$$t=\frac{\Re e\langle x^*,x\rangle}{\langle\Re(A\sharp_{\lambda}B)x^*,x^*\rangle}$$
then we obtain, after all reduction, the desired inequality (\ref{eq44}), so completes the proof.
\end{proof}

\bibliographystyle{amsplain}

\end{document}